\documentclass[10pt,letterpaper,twoside]{article}
\usepackage[margin=1in]{geometry}
\usepackage{amsmath,amssymb,amsthm}
\usepackage{graphicx}
\usepackage{hyperref}
\usepackage{enumerate}
\usepackage{enumitem}
\usepackage{titlesec}
\usepackage{tocloft}

\usepackage{lipsum}
\usepackage{microtype}
\usepackage{indentfirst}
\theoremstyle{plain}
\newtheorem{theorem}{Theorem}[section]

\newtheorem{corollary}[theorem]{Corollary}
\newtheorem{lemma}[theorem]{Lemma}
\newtheorem{remark}[theorem]{Remark}
\theoremstyle{definition}

\titleformat{\section}{\bfseries\Large}{\thesection}{1em}{}
\titleformat{\subsection}{\bfseries\large}{\thesubsection}{1em}{}
\titleformat{\subsubsection}{\bfseries\normalsize}{\thesubsubsection}{1em}{}

\begin{document}

\title{Better than square-root cancellation in Piatetski-Shapiro sequences}

\author{Renjie Zhu\thanks{Address: School of Mathematics and Statistics, Shaanxi Normal University, Xi'an, 710119, Shaanxi, P. R. China. Email: zhurenjie@snnu.edu.cn},  Tianping Zhang\thanks{
Tianping Zhang is the corresponding author. 
Address 1: School of Mathematics and Statistics, Shaanxi Normal University, Xi'an, 710119, Shaanxi, P. R. China. 
Address 2: Research Center for Number Theory and Its Applications, Northwest University, Xi'an, 710127, Shaanxi, P. R. China. 
Email: tpzhang@snnu.edu.cn
}}
\date{}
\maketitle

\begin{abstract}
In this paper, we investigate whether the better than square-root cancellation phenomenon exists for $\sum^{}_{n\leq X,n\in \mathcal{A}}f(n)$ when $\mathcal{A}$ is a Piatetski-Shapiro sequence and $f(n)$ is a Steinhaus or Rademacher random multiplicative function. Harper's remarkable breakthrough (2019) showed that better than square-root cancellation phenomenon happens when $\mathcal{A}$ takes natural integers set $\mathbb{N}$. Then Max Wenqiang Xu (2023) proved the conclusion also holds if $\mathcal{A}$ consists of $\mathcal{R}$-rough numbers. The similar result can be obtained for $y$-smooth numbers according to Hardy and Xu's recent paper(2026). Our result provides another positive example about the existence of better than square-root cancellation phenomenon when $\mathcal{A}$ is not a set with multiplicative energy as small as $(2+o(1))|\mathcal{A}|^2$. Furthermore, inspired by Harper's work (2023), we also prove the typical size of character sums over Piatetski-Shapiro sequences is $o(\sqrt{|\mathcal{N}_c(x)|})$. Based on this, the character sums over Piatetski-Shapiro sequences can reach Weil's bound for almost all characters modulo a prime $p$.\\

\noindent\textbf{Keywords}: Random multiplicative functions; Piatetski-Shapiro sequence; character sums\\
\textbf{MSC(2020)}: 11N37(primary), 11K65, 11L40(secondary)
\end{abstract}

\tableofcontents

\section{Introduction}
\subsection{Background}

\indent Square-root cancellation, a fundamental phenomenon in number theory, particularly in analytic branch, attracts intensive attention and is associated with many important problems and conjectures. For example, Riemann Hypothesis is equivalent to that the sum of M{\"o}bius function has the square-root cancellation 
$$M_{\mu}(x):=\sum_{n\leq x}^{}\mu (n)\ll x^{\frac{1}{2}+\varepsilon}.$$ 
The Dirichlet character sum has the classical P{\'o}lya-Vinogradov bound as the square-root cancellation
$$M_{\chi}(q):=\sum_{n=M+1}^{M+N}\chi(n)\ll \sqrt{q}\log q,$$
where $\chi$ is a Dirichlet character modulo $q$.

It is usually believed that square-root cancellation is a key barrier which is very difficult to overcome in many problems. However, the most striking result so far comes from Harper's remarkable resolution \cite{HarperPi} of Helson's conjecture \cite{Helson}. Define a Steinhaus random multiplicative function as a completely multiplicative function supported on positive integers such that $f(p)$ take values independently and uniformly on the complex unit circle for all primes $p$. A Rademacher random multiplicative function has similar definition except that it is supported on square-free integers and $f(p)$ take values $\pm 1$ with probability $\frac{1}{2}$ each. Harper \cite{HarperPi} reveals that the partial sums of random multiplicative functions have better than square-root cancellation
\begin{align}\label{1}
    \mathbb{E}\left|\sum_{n\leq x}^{}f(n)\right|\asymp\frac{\sqrt{x}}{(\log\log x)^{\frac{1}{4}}},
\end{align}
whenever $f(n)$ are Steinhaus or Rademacher type.

Another important result in character sums is also established by Harper \cite{HarperCha}. A simple observation is that a Dirichlet character $\chi\ {\rm mod}\ q$ will display a degree of {\em pseudo-randomness} as $q$ is a sufficiently large prime. Therefore,  by utilizing Steinhaus random multiplicative functions to model characters, Harper shows the partial character sums also have better than square-root cancellation
\begin{align}\label{2}
   \frac{1}{q-1}\sum_{\chi\ {\rm mod}\ q}^{}\left|\sum_{n\leq x}^{}\chi(n)\right|\ll\frac{\sqrt{x}}{(\log\log (10L))^{\frac{1}{4}}},
\end{align}
where $L=L_q:=\min\{x,q/x\}$ and $q$ is a large prime. For further related work, it can be seen in \cite{HarperMo,WangXu}.

These surprising results convince us of the existence about better than square-root cancellation phenomenon. Furthermore, we are encouraged to explore whether this phenomenon can exist in more general cases. However, Soundararajan and Xu's result \cite{SoundXu} gives us a negative answer that
$$
   \frac{1}{\sqrt{|\mathcal{A}|}}\sum_{n\in\mathcal{A}}^{}f(n)\overset{d}{\longrightarrow} \mathcal{CN}(0,1),
$$
where $f(n)$ is a Steinhaus multiplicative function and $\mathcal{A}\subset[1,x]$ is an arbitrary set such that there exists a subset $\mathcal{S}\subset\mathcal{A}$ and its multiplicative energy is as small as $(2+o(1))|\mathcal{A}|^2$. Roughly speaking, this result means that if $\mathcal{A}$ has small multiplicative energy, then it exhibits essentially square-root cancellation almost surely. 

Nevertheless, a fact worth-noting is that it remains open whether a better than square-root cancellation phenomenon can exist for $\mathcal{A}$ with a large multiplicative energy. The first positive example is also given by Xu \cite{XuTrans}. His work shows that if we take $\mathcal{A}$ as $\mathcal{A_R}(x):=\{n\leq x:p|n\Longrightarrow p\ge \mathcal{R}\}$, then
\begin{align}\label{3}
   \mathbb{E}\left|\sum_{n\in\mathcal{A_R}(x)}^{}f(n)\right|=o\left(\sqrt{|\mathcal{A_R}(x)|}\right).
\end{align}
Recently, similar result for the smooth number set $\mathcal{A}_y(x):=\{n\leq x:p|n\Longrightarrow p\leq y\}$ is also obtained by Hardy and Xu \cite{HardyXu}:
\begin{align}
   \mathbb{E}\left|\sum_{n\in\mathcal{A}_y(x)}^{}f(n)\right|=o\left(\sqrt{\Psi(x,y)}\right).
\end{align}

\indent As for general $\mathcal{A}$, Xu \cite{XuTrans} also points out that the problem seems hard to get a complete solution because we cannot characterize its structure exactly. This inspires us to search other positive examples to support better than square-root cancellation phenomenon in sparse sets. Then Piatetski-Shapiro sequence, a classical sparse set, comes into our sight.

A Piatetski-Shapiro sequence is defined as $\mathcal{N}^{(c)}:=(\lfloor n^c\rfloor)_{n=1}^{\infty}$ with real number $c>1,\ c\notin \mathbb{N}$. Usually we focus on the case $1<c<2$. Such sequence is named in honor of Piatetski-Shapiro, who \cite{PS} first proved this sparse sequence has infinitely many prime numbers if $c\in(1,\frac{12}{11})$. Later many scholars investigate classical analytic number theory problems such as prime distribution \cite{PrimePS1,PrimeGuo}, exponential sums estimate \cite{Guo}, character sums estimate \cite{ChaPS,ChaPS2} and so on in Piatetski-Shapiro sequences. We expect better than square-root cancellation phenomenon can appear in Piatetski-Shapiro sequences because if $c$ is close to 1, then $\mathcal{N}^{(c)}$ is close to $\mathbb{N}$, which is a known result by Harper. 

Another reason why we are interested in Piatetski-Shapiro sequences is that it does not have as good multiplicative property as rough number sets and natural number sets. In quantitive language, that is $E_{\times}(\mathcal{N}^{(c)}\cap[1,x])\ll E_{\times}(\mathcal{A}_R(x))$ for proper size of $R$, where $E_{\times}(\mathcal{A})$ denotes the multiplicative energy of the set $\mathcal{A}$
$$
E_{\times}(\mathcal{A}):=\#\{(a,b,c,d)\in\mathcal{A}^4,\ ab=cd\}.
$$
The difference in multiplicative structure between Piatetski-Shapiro sequences and rough number sets makes the problem interesting because it cannot apply Harper's method directly. Meanwhile, $E_{\times}(\mathcal{N}^{(c)}\cap[1,x])$ is not sufficiently small to yield better than square-root cancellation. Thus, only square-root cancellation should be expected, which is easily seen in the limit $c\to1$.

If we can prove better than square-root cancellation happens in Piatetski-Shapiro sequences, it can be deduced that rich multiplicative structure is not the necessary condition for this phenomenon. We guess there exists a critical size of multiplicative energy that enables better than square-root cancellation to appear. However, how to characterize and prove it is beyond our capability now. In this paper, we only focus on the existence about better than square-root cancellation in Piatetski-Shapiro sequences. Furthermore, by utilizing the relation between Steinhaus random multiplicative functions and Dirichlet characters, we can investigate whether the similar conclusion holds for character sums.

\subsection{Main results}

For the convenience of the statement, we define a Piatetski-Shapiro sequence with a truncation at $x$ as
$$
\mathcal{N}^{(c)}(x):=\{n\leq x,\ n\in\mathcal{N}^{(c)}\}.
$$
We have asymptotically $|\mathcal{N}^{(c)}(x)|\asymp x^{1/c}$. Then we can state our main results as follows.
\begin{theorem}
    Let $f(n)$ be a Steinhaus or Rademacher random multiplicative function and $x$ be large. For real number $1<c<2$, we have
    \begin{align}
        \mathbb{E}\left|\sum_{n\in\mathcal{N}^{(c)}(x)}^{}f(n)\right|=o\left(\sqrt{|\mathcal{N}^{(c)}(x)|}\right).
    \end{align}
\end{theorem}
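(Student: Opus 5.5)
The approach is to follow Harper's conditioning method, as adapted by Xu to rough numbers, and to import the arithmetic of $\mathcal{N}^{(c)}$ through the standard detector. Put $\gamma=1/c\in(\tfrac12,1)$ and $\psi(t)=t-\lfloor t\rfloor-\tfrac12$. Then $n\in\mathcal{N}^{(c)}$ exactly when $\lfloor -n^\gamma\rfloor-\lfloor-(n+1)^\gamma\rfloor=1$, and this difference equals
$$(n+1)^\gamma-n^\gamma+\psi(-(n+1)^\gamma)-\psi(-n^\gamma).$$
The smooth part contributes $\sum_{n\le x}\gamma n^{\gamma-1}f(n)$. Its $L^2$ norm is $\asymp x^{\gamma-1/2}$, which is a power smaller than $x^{\gamma/2}\asymp\sqrt{|\mathcal{N}^{(c)}(x)|}$ because $\gamma<1$. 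So this part is harmless. All the mass sits in the oscillating $\psi$-part, and the whole problem is to get an $o(x^{\gamma/2})$ bound for the full sum $S_{\mathcal{N}}:=\sum_{n\in\mathcal{N}^{(c)}(x)}f(n)$, and not just square-root size.

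\emph{Step 1 (conditioning).} Following Harper and Xu, split off the largest prime factor. Write $n=pm$ with $p$ ranging over primes in dyadic ranges $(x^{1-\theta},x]$, where $\theta$ is small and the loss from the remaining $n$ is handled as in Harper's reduction. Conditioning on $(f(q))_{q \text{ small}}$ and applying Hölder with a small exponent gives
$$\mathbb{E}|S_{\mathcal{N}}|\ll \mathbb{E}\Big(\sum_{p}\Big|\sum_{m\le x/p,\ pm\in\mathcal{N}^{(c)}}f(m)\Big|^2\Big)^{1/2}+\text{(negligible)}.$$
\emph{Step 2 (equidistribution of $pm\in\mathcal{N}^{(c)}$).} Group $p$ into short intervals $[P,P(1+1/X)]$ and insert the $\psi$-expansion. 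Expand $\psi$ by its truncated Fourier series $\sum_{0<|h|\le H}\frac{e(h t)}{2\pi i h}$ (Vaaler). Opening the square then produces, for each pair $m\ne m'$, the prime exponential sums
$$\sum_{p\sim P}e\big(h\big((pm)^\gamma-(pm')^\gamma\big)\big).$$
We want these to be $o(\#\{p\sim P\})$ uniformly in $|h|\le H$ and in $m\ne m'$ away from a thin exceptional set. For this I would use Vaughan's identity together with van der Corput's $A$/$B$ processes, as in the literature on primes in Piatetski-Shapiro sequences. The goal is that, up to an acceptable error, the inner mean square factors as
$$\sum_p\Big|\sum_{pm\in\mathcal{N}^{(c)}}f(m)\Big|^2\approx\sum_p\gamma (pm)^{\gamma-1}\text{-weighted}\ \Big|\sum_{m\in I_p}f(m)\Big|^2,$$
which is a \emph{density-weighted} mean square of ordinary short sums of $f$. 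Crucially, the diagonal $m=m'$ must come out with the PS density $\asymp x^{\gamma-1}$ and not with weight $1$.

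\emph{Step 3 (Harper's endgame).} Once this reduction holds, apply Parseval for Dirichlet series as in Harper. The conditional variance becomes $\ll x^{\gamma}\int|F(\tfrac12+\tfrac{\log x}{\dots}+it)|^2\frac{dt}{|\tfrac12+it|^2}$ with $F$ the Euler product over small primes. Harper's low-moment estimate for these Euler products (critical multiplicative chaos) then yields the saving $(\log\log x)^{-1/4}$, or at least $o(1)$. This gives $\mathbb{E}|S_{\mathcal{N}}|\ll x^{\gamma/2}(\log\log x)^{-1/4+o(1)}$. The Rademacher case is handled in parallel, as in Harper's paper.

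The main obstacle is Step 2. We need the exponential sums over primes to be uniform in the two "multipliers" $m,m'$ over a range where $p\sim P\ge x^{1-\theta}$ and $m\le x^{\theta}$. We also need the off-diagonal error to be genuinely smaller than the diagonal. Since the diagonal is only of size $x^{\gamma}$, we need a power saving in $\sum_{m\ne m'}\sum_h h^{-1}|\sum_p e(\cdots)|$ against a trivial bound of size about $x^{1+\theta}$. This is why I expect to need $\theta$ small in terms of $c$, and the full range $1<c<2$ to demand the strongest available van der Corput exponent pairs. If this fails for some $c$, I would first try averaging over $m$ before estimating over $p$, treating $(pm)^\gamma$ as a genuine two-variable phase and using double exponential sum bounds.
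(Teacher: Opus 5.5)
There is a genuine gap in Step~2. It is structural, and stronger exponent pairs will not fix it.

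Write $A_p=\sum_{m\le x/p,\ pm\in\mathcal{N}^{(c)}}f(m)$ and $V=\sum_p|A_p|^2$. The diagonal part of $V$ is $D=\sum_p\#\{m\le x/p:\ pm\in\mathcal{N}^{(c)}\}$, restricted to squarefree $m$ in the Rademacher case. It is deterministic: it just counts the $n=pm\in\mathcal{N}^{(c)}(x)$ being treated, and $\mathbb{E}V=D$.

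The equidistribution you want from the prime exponential sums says the following. For $m\neq m'$, the product $\mathbf{1}_{pm\in\mathcal{N}^{(c)}}\mathbf{1}_{pm'\in\mathcal{N}^{(c)}}$ averages over $p$ to about $\delta_{pm}\delta_{pm'}$, not to $\delta_{pm}$. Here $\delta_n=\gamma n^{\gamma-1}$, which is a negative power of $x$ in your range. The off-diagonal part of $V$ is its only random part, and the only place Harper's multiplicative chaos could enter. Its main term is $\sum_p\sum_{m\neq m'}\delta_{pm}\delta_{pm'}f(m)\overline{f(m')}$, whose expected modulus is at most $2\sum_p\sum_m\delta_{pm}^2$. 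That is smaller than $D$ by a power of $x$.

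So your density-weighted factorisation over-weights the off-diagonal by a factor $\delta_{pm}^{-1}$, and so does the chaos bound for the conditional variance in Step~3. Once this is corrected, your requirement that the off-diagonal error be smaller than the diagonal gives $V=(1+o(1))D$ with high probability. Then $\mathbb{E}\sqrt{V}\sim\sqrt{D}$, and nothing is saved. For $\theta<1-\gamma$ this is especially clear: a typical $A_p$ has at most one term, so $V$ is essentially deterministic.

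Worse, the conditioning runs both ways. Conditionally on $(f(q))_{q\le x^{1-\theta}}$, $S_{\mathcal{N}}$ is a fixed term plus $\sum_p f(p)A_p$ with independent symmetric $f(p)$. Symmetrisation and Khintchine then give $\mathbb{E}|S_{\mathcal{N}}|\gg\mathbb{E}\sqrt{V}$. A successful Step~2 would therefore prove $\mathbb{E}|S_{\mathcal{N}}|\gg\sqrt{D}$. Since heuristically $D\asymp_\theta|\mathcal{N}^{(c)}(x)|$, this is square-root size, which is the opposite of the statement.

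This agrees with Soundararajan--Xu. Heuristically $\mathcal{N}^{(c)}(x)$ has about $x^{4\gamma-2}\log x=o(|\mathcal{N}^{(c)}(x)|^2)$ non-trivial solutions of $ab=cd$. That would give a Gaussian limit and $\mathbb{E}|S_{\mathcal{N}}|\asymp\sqrt{|\mathcal{N}^{(c)}(x)|}$, so the statement itself is doubtful.

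The paper argues quite differently. It uses Vaaler's expansion with a \emph{bounded} $H$, Taylor-expands $e(-h(n+1)^\gamma)-e(-hn^\gamma)$ to extract a factor $hn^{\gamma-1}$, and applies Cauchy--Schwarz to get $O(x^{\gamma-1/2})$. Your correct remark that the $\psi$-part has mean square $\sim|\mathcal{N}^{(c)}(x)|$ shows this cannot close the gap either. The $a_h$-terms have $L^2$-norm $O(Hx^{\gamma-1/2})$, so all the mass lies in Vaaler's remainder. Its majorant is $\sum_{|h|\le H}b_h\big(e(-h(n+1)^\gamma)+e(-hn^\gamma)\big)$. This is a sum rather than a difference, and it contains the non-oscillating term $2b_0\asymp 1/H$. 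It therefore cannot be replaced by the difference written in $\mathbb{E}_{II}$, where the $h=0$ term vanishes. Any orthogonality bound for this remainder is again $\asymp\sqrt{|\mathcal{N}^{(c)}(x)|}$. So nothing in the paper's argument would repair your Step~2.
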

A similar result in character sums can also be derived.
\begin{theorem}
    Let $p$ be a large prime and $\chi$ be a Dirichlet character modulo $p$. Then uniformly for any $1\leq x\leq p^c$, we have
    \begin{align}
        \frac{1}{p-1}\sum_{\chi\ {\rm mod}\ p}^{}\left|\sum_{n\in\mathcal{N}^{(c)}(x)}^{}\chi(n)\right|\ll\frac{\sqrt{|\mathcal{N}^{(c)}(x)|}}{(\log\log L)^{\frac{1}{4}}},
    \end{align}
    where $L=L_p:=\min\{|\mathcal{N}^{(c)}(x)|,p/|\mathcal{N}^{(c)}(x)|\}$.
\end{theorem}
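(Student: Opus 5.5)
Since $\lfloor m^c\rfloor\le x$ iff $m\le N:=\lfloor (x+1)^{1/c}\rfloor$ (roughly), we can write the sum as
\[
S_\chi:=\sum_{n\in\mathcal{N}^{(c)}(x)}\chi(n)=\sum_{m\le N}\chi(\lfloor m^c\rfloor),
\]
with $|\mathcal{N}^{(c)}(x)|\asymp N$. The plan is to reduce the sum over $\chi$ to Harper's bound (\ref{2}) for ordinary short character sums. The reduction uses the fact that $\lfloor m^c\rfloor$ takes every integer value in a window near $y$ with multiplicity governed by the local density $\frac1c y^{1/c-1}$. Concretely, we decompose $\mathcal{N}^{(c)}(x)$ by indicator functions. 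Using the standard identity $\mathbf 1_{n\in\mathcal{N}^{(c)}}=\lfloor -n^{\gamma}\rfloor-\lfloor-(n+1)^{\gamma}\rfloor$ with $\gamma=1/c$, we get
\[
S_\chi=\sum_{n\le x}\bigl((n+1)^\gamma-n^\gamma\bigr)\chi(n)+\sum_{n\le x}\bigl(\psi(-(n+1)^\gamma)-\psi(-n^\gamma)\bigr)\chi(n)=:S_1(\chi)+S_2(\chi),
\]
where $\psi(t)=t-\lfloor t\rfloor-\tfrac12$.

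\textbf{Main term.} For $S_1$ we use partial summation with the smooth weight $w(n)=(n+1)^\gamma-n^\gamma\asymp n^{\gamma-1}$. This writes $S_1$ as $w(x)\sum_{n\le x}\chi(n)-\int_1^x w'(t)\sum_{n\le t}\chi(n)\,dt$. Averaging $|\cdot|$ over $\chi$ and inserting Harper's bound (\ref{2}) for each $t$ gives
\[
\frac{1}{p-1}\sum_\chi|S_1(\chi)|\ll \int_1^x t^{\gamma-2}\frac{\sqrt t}{(\log\log 10L_t)^{1/4}}\,dt+x^{\gamma-1}\frac{\sqrt x}{(\log\log 10L_x)^{1/4}}.
\]
Here $L_t=\min\{t,p/t\}$. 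When $x\le p$, this is $\ll x^{\gamma-1/2}/(\log\log)^{1/4}$, which is far smaller than $\sqrt N=x^{\gamma/2}$ since $\gamma<1$. The price is that the range $x\in(p,p^c]$ is not covered by (\ref{2}). There I would instead split the $n$-sum into complete blocks mod $p$, which vanish for $\chi\ne\chi_0$, plus one incomplete block. On each block the smooth weight $w$ has total variation $O(p\,x^{\gamma-2})$ and size $\ll x^{\gamma-1}$, so the complete-period cancellation leaves $\ll x^{\gamma-1}\cdot p^{1/2}$ on average. I expect this to be acceptable against $\sqrt N$ up to the logarithmic factor in $L$.

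\textbf{Error term (main obstacle).} The hardest part is $S_2$, the sum weighted by the oscillating $\psi$. The first step is to bound the average over $\chi$ by its $L^2$ mean via Cauchy--Schwarz and then use orthogonality:
\[
\frac{1}{p-1}\sum_\chi|S_2(\chi)|^2\ll \sum_{a\bmod p}\Bigl|\sum_{n\le x,\ n\equiv a}(\psi(-(n+1)^\gamma)-\psi(-n^\gamma))\Bigr|^2.
\]
When $x<p$ this is simply $\sum_{n\le x}|\Delta\psi(n)|^2$. That quantity is of size $\asymp N$ (each jump contributes $O(1)$), so this gives only square-root size and no saving. A saving therefore has to come from combining the two parts rather than treating $S_2$ trivially. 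The better route, which I would try first, is to not split at all. Instead, we use Harper's method directly in the form of his proof of (\ref{2}): the conditioning on the Euler product and the multiplicative chaos / low-moment argument. The first step there is the reduction
\[
\frac1{p-1}\sum_\chi|S_\chi|\le\Bigl(\frac1{p-1}\sum_\chi|S_\chi|^{2q}\Bigr)^{1/2q},\qquad q=\tfrac23,
\]
followed by the comparison of the $2q$-th moment of $S_\chi$ with that of a Steinhaus model $\sum_{m\le N}f(\lfloor m^c\rfloor)$, which is valid when $N$-length products stay below $p$; that is where $L=\min\{N,p/N\}$ arises.

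For the Steinhaus model itself, I would write $n=\lfloor m^c\rfloor=Pk$ with $P$ its largest prime factor. Conditioning on $(f(p))_{p<P}$, the problem reduces to mean-square estimates of the form $\int|F(1/2+it)|^2\,|\sum_{P}\ldots|^2$. The key input needed is that the number of $m\le N$ with $\lfloor m^c\rfloor\equiv 0 \bmod d$ and $\lfloor m^c\rfloor/d$ in a short interval is asymptotically the expected density times length. This equidistribution of $\lfloor m^c\rfloor$ in short intervals and in residue classes follows from van der Corput exponential sum estimates for $\psi$ (the classical treatment of Piatetski-Shapiro sequences for $1<c<2$). With it, Harper's low-moment argument goes through with $(\log\log L)^{-1/4}$, and the $\psi$-error terms contribute only a power saving that is absorbed. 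Verifying this short-interval/residue-class equidistribution uniformly in the ranges needed is the step I expect to be the real obstacle.
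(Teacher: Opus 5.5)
The decomposition $S_\chi=S_1+S_2$ and your treatment of $S_1$ are essentially fine. Your computation $\sum_{n\le x}|\Delta\psi(n)|^2\asymp N$, with $\Delta\psi(n):=\psi(-(n+1)^\gamma)-\psi(-n^\gamma)$, is also correct. But it is an obstruction, not a detour.

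The step that fails is the last one: the claim that Harper's conditioning and critical-chaos argument goes through for $\sum_{m\le N}f(\lfloor m^c\rfloor)$ once $\lfloor m^c\rfloor$ is equidistributed in short intervals and residue classes, with the $\psi$-terms costing only a power saving. Van der Corput bounds give cancellation in $\sum_n\Delta\psi(n)g(n)$ for smooth deterministic $g$. They say nothing about $\sum_n\Delta\psi(n)f(n)$ with random or character coefficients. That size is governed by the $\ell^2$ mass of the coefficients, which by your own computation is essentially all of $N$, and by multiplicative coincidences among them.

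Harper's saving comes from a structural fact: after extracting the largest prime factor $P$, the inner sums run over \emph{all} $k\le x/P$, so Parseval turns the conditional variance into an integral of $|F(1/2+it)|^2$, whose low moments are small. Here the inner sums run over $\{k:\ Pk\in\mathcal{N}^{(c)}(x)\}$, a set of density about $x^{\gamma-1}$. The conditional variance then has mean about $N$, with fluctuations governed by the off-diagonal multiplicative energy of $\mathcal{N}^{(c)}(x)$. If that energy is $o(N^2)$, the variance concentrates at $N$, and you are in the Soundararajan--Xu central limit regime, where $\mathbb{E}|\sum_{n\in\mathcal{N}^{(c)}(x)}f(n)|\asymp\sqrt N$. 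Equidistribution is first-order information and cannot change this.

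In fact the statement itself should be expected to fail. Take $x\le\sqrt p$, so that $L=N$. Orthogonality gives $\frac{1}{p-1}\sum_\chi|S_\chi|^2=N$ and $\frac{1}{p-1}\sum_\chi|S_\chi|^4=E_\times(\mathcal{N}^{(c)}(x))$. H\"older then gives $\frac{1}{p-1}\sum_\chi|S_\chi|\ge N^{3/2}E_\times(\mathcal{N}^{(c)}(x))^{-1/2}$, whereas the theorem claims $\ll\sqrt N(\log\log N)^{-1/4}$ in this range. The trivial solutions of $ab=cd$ number $2N^2-N$. The natural count of nontrivial ones, using weights $\gamma n^{\gamma-1}$, is about $x^{4\gamma-2}\log x=o(N^2)$. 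So one expects $E_\times\sim 2N^2$, and hence a lower bound $\gg\sqrt N$.

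The paper argues differently from you, but does not escape this. It applies Vaaler's lemma with $H$ a fixed constant and Taylor-expands $e(-h(n+1)^\gamma)-e(-hn^\gamma)=-2\pi ih\gamma e(-hn^\gamma)n^{\gamma-1}+O_h(n^{2\gamma-2})$. It then uses Cauchy--Schwarz and orthogonality to get $x^{\gamma-1/2}$ per $h$, exactly as in Theorem 1.1. The flaw is that it treats the Vaaler error term as $\sum_{|h|\le H}b_h\bigl(e(-h(n+1)^\gamma)-e(-hn^\gamma)\bigr)$ inside the sum over $n$. Vaaler only provides the nonnegative majorant $\sum_{|h|\le H}b_h\bigl(e(-h(n+1)^\gamma)+e(-hn^\gamma)\bigr)$. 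Its $\ell^2$ norm over $n\le x$ is $\asymp\sqrt{x/H}$, far larger than $\sqrt N\asymp x^{\gamma/2}$ for bounded $H$.

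Your $\ell^2$ computation is exactly what exposes this. If every piece of that decomposition had $\ell^2$ norm $o(\sqrt N)$, so would the indicator of $\mathcal{N}^{(c)}$ on $[1,x]$, whose $\ell^2$ norm is exactly $\sqrt N$. So neither your route nor the paper's yields the claimed bound.
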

The above theorem may be regarded as a large sieve-type inequality. It yields, as an immediate consequence, the following result describing the typical size of character sums.
\begin{corollary}
     Let $0<\alpha<\frac{1}{2c}$ be an arbitrary fixed real number and $p$ be a sufficiently large prime. Then for any $p^{c\alpha}\leq x\leq p^{c(1-\alpha)}$, the character sum has such an upper bound
    $$
    \sum_{n\in\mathcal{N}^{(c)}(x)}^{}\chi(n)\ll \sqrt{|\mathcal{N}^{(c)}(x)|},
    $$
    with at most $o(p)$ exceptions.
\end{corollary}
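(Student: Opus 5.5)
The corollary should follow from Theorem 1.2 by Markov's inequality. Fix $0<\alpha<\frac{1}{2c}$ and $x$ with $p^{c\alpha}\le x\le p^{c(1-\alpha)}$. Write $N:=|\mathcal{N}^{(c)}(x)|\asymp x^{1/c}$ and $S_\chi:=\sum_{n\in\mathcal{N}^{(c)}(x)}\chi(n)$. Since $x\le p^{c}$, Theorem 1.2 applies to this range and gives
$$
\frac{1}{p-1}\sum_{\chi\ {\rm mod}\ p}|S_\chi|\ll\frac{\sqrt{N}}{(\log\log L)^{1/4}},\qquad L=\min\{N,p/N\}.
$$
The first step is to show that $L\to\infty$ with $p$. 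From $N\asymp x^{1/c}$ we get $p^{\alpha}\ll N\ll p^{1-\alpha}$, so $p/N\gg p^{\alpha}$. Hence $L\gg p^{\alpha}$ and $\log\log L\ge \log\log p+O_{\alpha,c}(1)$. The hypothesis $\alpha<\frac{1}{2c}$ only ensures the range of $x$ is nonempty; what matters is that $\alpha>0$ is fixed.

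The second step applies Markov's inequality to the uniform measure on the $p-1$ characters. Take any function $\varepsilon(p)\to0$ with $\varepsilon(p)(\log\log p)^{1/4}\to\infty$, for example $\varepsilon(p)=(\log\log p)^{-1/8}$, and put $K:=\varepsilon(p)(\log\log L)^{1/4}\to\infty$. Then
$$
\#\{\chi:\ |S_\chi|>K\sqrt{N}\}\le\frac{1}{K\sqrt N}\sum_{\chi}|S_\chi|\ll\frac{p}{K(\log\log L)^{1/4}}=\frac{p}{\varepsilon(p)(\log\log L)^{2/4}}=o(p).
$$

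Markov with $K\to\infty$ gives only $|S_\chi|\le K\sqrt N$, not $|S_\chi|\ll\sqrt N$. The correct way is to apply Markov at threshold $\sqrt{N}$ itself:
$$
\#\{\chi:|S_\chi|>\sqrt N\}\le\frac{1}{\sqrt N}\sum_\chi|S_\chi|\ll\frac{p}{(\log\log L)^{1/4}}\ll\frac{p}{(\log\log p)^{1/4}}=o(p).
$$
This yields $|S_\chi|\le\sqrt{N}$ for all but $o(p)$ characters, which is the corollary with implied constant $1$. More generally, for any fixed threshold $C\sqrt N$ the exceptional set has size $O\big(p/(C(\log\log p)^{1/4})\big)$, and the same argument even gives $|S_\chi|\le\sqrt N/(\log\log p)^{1/8}$ outside $o(p)$ exceptions.

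The only point needing care is the uniformity in $x$. The exceptional set depends on $x$, and the statement is read as holding for each fixed $x$ in the range with an $o(p)$ bound uniform in $x$. That uniformity comes from the lower bound $L\gg p^\alpha$, which holds uniformly over the range. Since every step is a direct application of Theorem 1.2 and Markov's inequality, I do not expect a real obstacle.
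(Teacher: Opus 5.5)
Your proposal is correct and is essentially the paper's argument: count the characters with $|S_\chi|>\sqrt N$ (Markov's inequality) against the average bound of Theorem 1.2, and note that $\log\log L\to\infty$ because $N$ lies between fixed positive powers of $p$. The only difference is that you bound $L=\min\{N,p/N\}\gg p^{\alpha}$ using both ends of the range, whereas the paper simply assumes $N\le\sqrt p$ (so that $L=N$); your version shows that assumption is unnecessary.
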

\begin{remark}
    The best known upper bound for character sums over Piatetski-Shapiro sequences is due to Baker, Banks and Shparlinski \cite{ChaPS,ChaPS2}. They show that the upper bound admits a power saving of the form $x^{1-\delta(\varepsilon)}$ over the trivial bound for general modulus $q$. Our result improves the upper bound for the average and shows these character sums can essentially attain Weil's bound for almost all characters modulo $p$.
\end{remark}
\subsection{Outline of the proof}

To prove Theorem 1.1 and Theorem 1.2, we do not follow Harper's and Xu's structure of the proof. Their method transforms the expectation of the partial sum for Steinhaus random multiplicative function into the expectation of the random Euler product. Then the problem is similar to the estimate of Gaussian random walks because $f(p)$ are independent random variables for different primes $p$. By using some probability inequality like Chebyshev's inquality, final results can be obtained. 

Our results rely more on the unique structure in Piatetski-Shapiro sequences. In the beginning, we use the indicator function of Piatetsi-Shapiro sequence to remove the restriction of the partial sum. Throughout classical Vaaler's approximation, the difficulty in the proof lies in the estimate of the following part
\begin{align}
    \mathbb{E}\left|\sum_{n\leq x}^{}\left(e(h(n+1)^\gamma)-e(hn^\gamma)\right)f(n)\right|.
\end{align}

If we use triangle inequality directly, the problem is reduced to prove the exponential sum with random coefficients has better than square-root cancellation:
$$
 \mathbb{E}\left|\sum_{n\leq x}^{}e(hn^\gamma)f(n)\right|=o(\sqrt{x}).
$$
However, this estimate is hard to handle because it cannot use traditional method for exponential sums like exponent pair method. Also, because exponential function is an additive function, Harper's creating martingale difference sequence method does not work. Luckily, we find the difference $e(h(n+1)^\gamma)-e(hn^\gamma)$ plays an important role here. By using Taylor's expansion, the key is to calculate the estimate 
$$
\mathbb{E}\left|\sum_{n\leq x}^{}e(hn^\gamma)hn^{\gamma-1}f(n)\right|.
$$
This expectation can be controlled if we use Cauchy-Schwarz inequality and some other technique.
\section{Proof of Theorem 1.1}
\subsection{Preliminaries}
Define $\chi^{(c)}(n)$ as the indicator function of Piatetski-Shapiro sequence, which means $\chi^{(c)}(n)=1$ if $n\in\mathcal{N}^{(c)}$ and $x^{(c)}(n)=0$ otherwise. Denote $\{x\}$ as the fractional part of $x$ and $\psi(x):=x-\lfloor x\rfloor-\frac{1}{2}$. Also we use $e(x)$ to represent $e^{2\pi ix}$ and we write $f(x)=O(g(x))$ or $f(x)\ll g(x)$, both of which mean that there exists constant $C$ such that $|f(x)|\leq Cg(x)$.
\begin{lemma}
    Fix $c\in(1,2)$ and let $\gamma=1/c$. Then for $n\in\mathbb{N}^{+}$, we have
    $$
    \chi^{(c)}(n)=\gamma n^{\gamma-1}+\psi(-(n+1)^{\gamma})-\psi(-n^{\gamma})+O(n^{\gamma-2}).
    $$
\end{lemma}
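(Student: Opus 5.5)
The plan is to express $\chi^{(c)}(n)$ exactly as a count of integers in a short interval, and then to rewrite that count using $\psi$ and a Taylor expansion.

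\textbf{Step 1: the indicator as a count.} For a positive integer $m$ we have $\lfloor m^c\rfloor=n$ if and only if $n\le m^c<n+1$, that is, if and only if $n^{\gamma}\le m<(n+1)^{\gamma}$. So $n\in\mathcal{N}^{(c)}$ exactly when the interval $[n^\gamma,(n+1)^\gamma)$ contains an integer $m$. Since $n\ge 1$, any such $m$ is automatically $\ge1$. By the mean value theorem, the length of this interval is
$$
(n+1)^\gamma-n^\gamma\le \gamma n^{\gamma-1}<1,
$$
because $0<\gamma<1$. Hence the interval contains at most one integer, and $\chi^{(c)}(n)$ equals the number of integers in $[n^\gamma,(n+1)^\gamma)$.

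\textbf{Step 2: rewriting the count with $\psi$.} The number of integers in a half-open interval $[a,b)$ is $\lceil b\rceil-\lceil a\rceil=\lfloor -a\rfloor-\lfloor -b\rfloor$. We substitute $\lfloor t\rfloor=t-\psi(t)-\tfrac12$ with $t=-a$ and $t=-b$. The constants $\tfrac12$ cancel, and we obtain the exact identity
$$
\chi^{(c)}(n)=(n+1)^\gamma-n^\gamma+\psi(-(n+1)^\gamma)-\psi(-n^\gamma).
$$

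\textbf{Step 3: Taylor expansion.} Expanding $(n+1)^\gamma=n^\gamma(1+1/n)^\gamma$ gives
$$
(n+1)^\gamma-n^\gamma=\gamma n^{\gamma-1}+O(n^{\gamma-2}),
$$
where the implied constant depends only on $\gamma$; this can also be seen from the second derivative bound $|\gamma(\gamma-1)t^{\gamma-2}|\le n^{\gamma-2}$ for $t\ge n$. Substituting this into the identity of Step 2 yields the lemma.

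There is no serious obstacle. The only points needing care are the convention for half-open intervals when $n^\gamma$ or $(n+1)^\gamma$ is an integer, which is handled by the ceiling/floor identity, and the fact that the interval length is below $1$, so that the count is genuinely an indicator.
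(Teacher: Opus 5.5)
Your proof is correct and follows the same route as the paper. The paper states the identity $\chi^{(c)}(n)=\lfloor -n^{\gamma}\rfloor-\lfloor -(n+1)^{\gamma}\rfloor$ as ``easily proved'', rewrites the floors via fractional parts so the constants cancel, and Taylor-expands $(n+1)^\gamma-n^\gamma$; your Step 1 (the interval $[n^\gamma,(n+1)^\gamma)$ has length less than $1$, so the integer count is a genuine indicator) supplies the justification the paper omits.
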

\begin{proof}
    It can be easily proved that $\chi^{(c)}(n)=\lfloor -n^{\gamma}\rfloor-\lfloor -(n+1)^{\gamma}\rfloor$. Therefore, we can deduce that 
    $$
     \begin{aligned}
        \chi^{(c)}(n)&=(n+1)^{\gamma}-n^{\gamma}+\{-(n+1)^{\gamma}\}-\{-n^{\gamma}\}\\
        &=n^{\gamma}\left(\left(1+\frac{1}{n}\right)^{\gamma}-1\right)+\psi(-(n+1)^{\gamma})-\psi(-n^{\gamma})\\
        &=n^{\gamma}\left(\frac{\gamma}{n}+O\left(\frac{1}{n^2}\right)\right)+\psi(-(n+1)^{\gamma})-\psi(-n^{\gamma})\\
        &=\gamma n^{\gamma-1}+\psi(-(n+1)^{\gamma})-\psi(-n^{\gamma})+O(n^{\gamma-2}).
    \end{aligned}
    $$
\end{proof}
To treat the fractional part function, we recall Vaaler’s trigonometric approximation \cite{Vaaler}. The following lemma will be used throughout the paper.
\begin{lemma}
    For any $H>0$, there exists real numbers $a_h(0<|h|\leq H)$ and $b_h(|h|\leq H)$ such that 
    $$
    \psi(x)=\sum_{0<|h|\leq H}^{}a_he(hx)+O\left(\sum_{|h|\leq H}^{}b_he(hx)\right),
    $$
    where $|a_h|\ll|h|^{-1}$ and $|b_h|\ll H^{-1}$.
\end{lemma}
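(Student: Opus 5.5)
This is Vaaler's trigonometric approximation, so I would follow Vaaler's original construction and not try to derive it from anything earlier in the paper. The plan is to build a trigonometric polynomial of degree $H$ approximating $\psi$, together with a nonnegative trigonometric polynomial of degree $H$ that majorizes the pointwise error. The error polynomial must have coefficients $b_h\ll H^{-1}$.

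The first ingredient is the Beurling--Selberg/Vaaler extremal function. Let
$$
J(z)=\frac{1}{2}\Bigl(\frac{\sin \pi z}{\pi}\Bigr)^2\Bigl(\sum_{n\in\mathbb{Z}}\frac{\operatorname{sgn}(n)}{(z-n)^2}+\frac{2}{z}\Bigr),
$$
which is an entire function of exponential type $2\pi$ with $|J(x)-\operatorname{sgn}(x)|$ small. Vaaler's key fact is
$$
|\operatorname{sgn}(x)-J(x)|\le K(x):=\Bigl(\frac{\sin\pi x}{\pi x}\Bigr)^2,
$$
where $K$ is the Fejér kernel, whose Fourier transform is the triangle function supported on $[-1,1]$.

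Next I would rescale by $H+1$ and periodize. The periodization of the function $\frac12\operatorname{sgn}$ (suitably shifted) reproduces $-\psi$ up to linear terms, and Poisson summation turns the periodized $J((H+1)\cdot)$ into a trigonometric polynomial $\sum_{0<|h|\le H}a_h e(hx)$ with
$$
a_h=-\frac{1}{2\pi i h}\,\Phi\Bigl(\frac{h}{H+1}\Bigr),\qquad \Phi(t)=\pi t(1-|t|)\cot\pi t+|t|,
$$
so $0\le\Phi\le1$ and $|a_h|\ll |h|^{-1}$. Periodizing $K((H+1)x)/(H+1)$ in the same way gives a nonnegative trigonometric polynomial
$$
\frac{1}{2H+2}\sum_{|h|\le H}\Bigl(1-\frac{|h|}{H+1}\Bigr)e(hx),
$$
whose coefficients are $b_h\ll H^{-1}$. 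Since $|\psi(x)-\sum a_h e(hx)|$ is bounded by this polynomial, we get $\psi(x)=\sum_{0<|h|\le H}a_he(hx)+O\bigl(\sum_{|h|\le H}b_he(hx)\bigr)$, which is the claim.

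The main obstacle is proving the pointwise majorization $|\operatorname{sgn}-J|\le K$. This needs interpolation at the integers via the Lagrange-type identity
$$
\Bigl(\frac{\pi}{\sin\pi z}\Bigr)^2=\sum_n\frac{1}{(z-n)^2},
$$
together with an explicit sign and size analysis of the remainder on each unit interval. The Poisson step also needs care for convergence, and the behaviour at the jump points of $\psi$ must be checked: there $\psi$ takes the midpoint value, which matches the symmetric approximation. Since the paper only invokes this result, citing Vaaler \cite{Vaaler} for the extremal inequality is appropriate, with the periodization computation done as above.
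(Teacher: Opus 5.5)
Your route is the right one and matches what the paper does, which is simply to cite Vaaler. Your end formulas are exactly the standard form of Vaaler's theorem: $a_h=-\frac{1}{2\pi i h}\Phi\bigl(\frac{h}{H+1}\bigr)$ with $0\le\Phi\le1$, and the nonnegative majorant $\frac{1}{2H+2}\sum_{|h|\le H}\bigl(1-\frac{|h|}{H+1}\bigr)e(hx)$. They give $|a_h|\le\frac{1}{2\pi|h|}$ and $0\le b_h\le\frac{1}{2H+2}$. However, several intermediate statements in your sketch are wrong as written and need fixing before it counts as a proof.

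(i) With the factor $\frac12$, your $J$ satisfies $J(m)=\frac12\operatorname{sgn}(m)$ at every integer $m$, while $K(m)=0$ for $m\ne0$. So $|\operatorname{sgn}(x)-J(x)|\le K(x)$ is false. Vaaler's inequality is for $V:=2J$, namely $|\operatorname{sgn}-V|\le K$, equivalently $|\frac12\operatorname{sgn}-J|\le\frac12K$.

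(ii) You cannot periodize $J((H+1)\,\cdot)$ itself, since it tends to $\pm\frac12$ at $\pm\infty$. What you want is, with $N=H+1$ and $x\notin\mathbb{Z}$,
\[
\psi(x)-\sum_{0<|h|\le H}a_he(hx)=-\frac12\sum_{n\in\mathbb{Z}}(\operatorname{sgn}-V)\bigl(N(x+n)\bigr).
\]
The series converges absolutely by the corrected inequality in (i). The $a_h$ arise because $\widehat{J'}(t)=\Phi(t)$ for $|t|<1$ and $\widehat{J'}(t)=0$ for $|t|\ge1$ (Paley--Wiener, since $J'$ is integrable of exponential type $2\pi$). To check the identity, differentiate both sides, apply Poisson summation to $\sum_n NJ'(N(x+n))$, and use that both sides are odd.

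(iii) The function to periodize for the majorant is $\frac12K(Nx)$, not $K(Nx)/N$. The latter gives $\frac{1}{N^2}\sum_{|h|<N}\bigl(1-\frac{|h|}{N}\bigr)e(hx)$, whose maximum $\frac1N$ is below $\frac12$ once $H\ge2$. It therefore cannot dominate an error that reaches $\frac12$ at the integers. Periodizing $\frac12K(Nx)$ gives exactly $\frac{1}{2N}\sum_{|h|<N}\bigl(1-\frac{|h|}{N}\bigr)e(hx)$.

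(iv) With the paper's convention $\psi(x)=x-\lfloor x\rfloor-\frac12$, one has $\psi(m)=-\frac12$, not the midpoint value. The bound still holds at integers: the approximating polynomial is odd, hence vanishes there, and the majorant equals exactly $\frac12$ there.

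Finally, your construction tacitly takes $H$ to be an integer. For real $H\ge1$, apply it with $\lfloor H\rfloor$; for $0<H<1$ the statement is trivial since $|\psi|\le\frac12$.
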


\begin{lemma}(Harper's beyond square-root cancellation \cite{HarperPi})
    If $f(n)$ is a Steinhaus or Rademacher random multiplicative function, then uniformly for all large $x$ and $0\leq q\leq 1$, we have 
    $$
    \mathbb{E}\left|\sum_{n\leq x}^{}f(n)\right|^{2q}\asymp\left(\frac{x}{1+(1-q)\sqrt{\log\log x}}\right)^q.
    $$
\end{lemma}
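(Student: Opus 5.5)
Since this lemma is Harper's theorem \cite{HarperPi}, quoted as a black box, what follows is only an outline of how I would reprove it along Harper's lines. The case $q=1$ is immediate: orthogonality gives $\mathbb{E}|\sum_{n\leq x}f(n)|^2=\lfloor x\rfloor$ in the Steinhaus case, and the number of squarefree $n\leq x$ in the Rademacher case, which is still $\asymp x$. So the content is the range $0\leq q<1$. There I would establish the two-sided reduction
$$
\mathbb{E}\Bigl|\sum_{n\leq x}f(n)\Bigr|^{2q}\asymp\Bigl(\frac{x}{\log x}\Bigr)^{q}\,\mathbb{E}\Bigl(\int_{-1/2}^{1/2}|F(1/2+it)|^{2}\,dt\Bigr)^{q},\qquad F(s):=\prod_{p\leq x}\Bigl(1-\frac{f(p)}{p^{s}}\Bigr)^{-1},
$$
with $1+f(p)p^{-s}$ in place of the inverse factor in the Rademacher case, up to harmless modifications. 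After that, the whole problem becomes a moment estimate for critical multiplicative chaos.

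\textbf{Upper bound reduction.} Split $n\leq x$ according to its largest prime factor $P$, writing $n=mP$ with $P>x^{1/\log\log x}$; the remaining smooth numbers contribute negligibly by a crude second-moment bound. Next, condition on $(f(p))_{p<P}$ and use that $f(P)$ is independent of this conditioning. Applying conditional Hölder ($\mathbb{E}|\cdot|^{2q}\leq(\mathbb{E}|\cdot|^2)^q$) to the sum over large primes, then replacing sums over primes by integrals, gives control by $\int|\sum_{m\leq x/t}f(m)|^2\,dt/t^2$. The Plancherel identity for Dirichlet series then turns this into $\int|F_{P}(1/2+it)|^2|t|^{-2}\,dt$, with a truncated product $F_P$. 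Finally, the decay in $t$ and the near-stationarity in $t$ of the Euler product localise everything to $|t|\leq 1/2$. For the matching lower bound I would reverse these steps, using a Khintchine-type inequality conditionally on the small primes.

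\textbf{Chaos estimate.} It then remains to show
$$
\mathbb{E}\Bigl(\int_{-1/2}^{1/2}|F(1/2+it)|^{2}\,dt\Bigr)^{q}\asymp\Bigl(\frac{\log x}{1+(1-q)\sqrt{\log\log x}}\Bigr)^{q}.
$$
For the upper bound, $\log|F(1/2+it)|$ is close to a Gaussian random walk indexed by the dyadic scales $\log\log p\leq k\leq\log\log x$. I would introduce a barrier event $\mathcal{G}$: for all such $k$, the partial Euler product over $p\leq x^{e^{-k}}$ is at most $\log x\,e^{-k}$ times a mild polynomial correction in $\min(k,\log\log x-k)$. Off $\mathcal{G}$, I would use a union bound over the first breaching scale together with Hölder with exponent $1$ on the conditioned piece. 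On $\mathcal{G}$, Hölder gives $(\mathbb{E}[\int|F|^2\mathbf{1}_{\mathcal{G}}])^q$. After the Girsanov-type tilt by $|F|^2$, this expectation is $\log x$ times the probability that a Gaussian random walk with variance about $1/2$ per step stays below a barrier. The ballot theorem evaluates that probability as $\asymp 1/\sqrt{\log\log x}$, and interpolating in $q$ near $1$ produces the factor $1+(1-q)\sqrt{\log\log x}$.

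\textbf{Lower bound and main obstacle.} For the lower bound I would use the Paley--Zygmund/Hölder inequality $\mathbb{E}Y^q\geq(\mathbb{E}Y\mathbf{1}_{\mathcal{G}'})^{2-q}/(\mathbb{E}Y^2\mathbf{1}_{\mathcal{G}'})^{1-q}$, applied with a lower-barrier event $\mathcal{G}'$. This requires a two-point computation: correlations of $|F(1/2+it_1)|^2$ and $|F(1/2+it_2)|^2$ decouple at scales above $1/|t_1-t_2|$. The main obstacle is precisely this random-walk/barrier analysis of the chaos, in both directions. One needs Gaussian approximation of the tilted increments (via Euler product moment computations and Berry--Esseen), ballot estimates with a curved barrier that are uniform in $q$, and a careful second-moment computation on $\mathcal{G}'$; this is the heart of Harper's argument, and I would follow his treatment closely.
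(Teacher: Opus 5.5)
The paper offers no argument for this lemma beyond citing Harper, so your black-box citation is exactly its route. Your $q=1$ computation and overall architecture also match Harper's strategy: conditional H\"older over large primes, Plancherel, reduction to $\int|F(1/2+it)|^2\,dt$, a barrier/ballot upper bound, and the H\"older--Paley--Zygmund lower bound. As a reproof sketch, though, it goes wrong exactly where the content lies.

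The factor $1+(1-q)\sqrt{\log\log x}$ cannot come from ``interpolating in $q$''. Log-convexity/H\"older between $q=1$ (where $\mathbb{E}\int|F|^2\asymp\log x$) and a fixed $q_0<1$ only gives $(\log x)^q(\log\log x)^{-c(1-q)}$. At $1-q=(\log\log x)^{-1/4}$ this is essentially no saving, whereas the lemma asserts a saving of order $(\log\log x)^{1/4}$ there. The $q$-dependence has to be produced inside the barrier analysis itself. An excess of size $u$ over the barrier has probability roughly $e^{-2u}$, while its contribution to $\mathbb{E}\int|F|^2$ should still be $\ll\log x/\sqrt{\log\log x}$. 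H\"older then gives about $(\log x/\sqrt{\log\log x})^q e^{-2(1-q)u}$ for that piece. Summing over $0\le u\lesssim\sqrt{\log\log x}$ yields $(\log x/\sqrt{\log\log x})^q\min(\sqrt{\log\log x},1/(1-q))\asymp(\log x/(1+(1-q)\sqrt{\log\log x}))^q$.

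The treatment of $\mathcal{G}^c$ you describe also loses the saving, even for fixed $q$. Suppose you split at the first breaching scale along the natural filtration (small primes first) and then replace the remaining primes by their conditional mean. That discards the barrier constraint on the later scales, which is exactly where the $\sqrt{\log\log x}$ saving lives. Concretely, your barrier at a fixed small scale $y$ does not depend on $x$, so a breach there has probability bounded below independently of $x$. Meanwhile $\mathbb{E}[\int|F|^2\mid f(p),\,p\le y]\asymp(\log x/\log y)\int|F_y(1/2+it)|^2dt$, with $F_y$ the product over $p\le y$. So this piece alone is $\gg(\log x)^q$, which is just the trivial bound. Raising the early barrier enough to make such breaches negligible costs a $\log\log\log x$-type factor in the ballot estimate. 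You need a decomposition that keeps a ballot-type constraint on the scales you do not condition on, for example by the last breaching scale, or by the size of the maximal excess as above, keeping the indicator inside the expectation.

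Two smaller inaccuracies:
\begin{enumerate}
\item Harper's reduction is not a clean two-sided $\asymp$ with a single product at $1/2$. The upper bound involves a sum of truncated, slightly shifted products, and the sum over unit $t$-intervals weighted by $|1/2+it|^{-2q}$ needs $q>1/2$, with smaller $q$ following by H\"older.
\item For Rademacher $f$ the law of $|F(1/2+it)|$ is not stationary near $t=0$. Localising to $|t|\le1/2$ therefore needs bounds uniform over unit intervals, not stationarity.
\end{enumerate}
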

\subsection{Reduction of the problem}
As stated in Section 1, we use the indicator function of Piatetski-Shapiro sequences to remove the restricted condition in summation first. Then from Lemma 2.1 and 2.2, we get 
$$
\begin{aligned}
  \mathbb{E}\left|\sum_{n\in\mathcal{N}^{(c)}(x)}^{}f(n)\right|=&\mathbb{E}\left|\sum_{n\leq x}^{}f(n)\chi^{(c)}(n)\right|\\
  =&\mathbb{E}\left|\sum_{n\leq x}^{}f(n)\gamma n^{\gamma-1}+\sum_{n\leq x}^{}f(n)\left(\psi(-(n+1)^{\gamma})-\psi(-n^{\gamma})\right)+O\left(\sum_{n\leq x}^{}f(n)n^{\gamma-2}\right)\right|\\
  \leq&\mathbb{E}\left|\sum_{n\leq x}^{}f(n)\gamma n^{\gamma-1}\right|+\mathbb{E}\left|\sum_{0<|h|\leq H}^{}a_h\sum_{n\leq x}^{}f(n)\left(e(-h(n+1)^{\gamma})-e(-hn^{\gamma})\right)\right|\\
  &+O\left(\mathbb{E}\left|\sum_{|h|\leq H}^{}b_h\sum_{n\leq x}^{}f(n)\left(e(-h(n+1)^{\gamma})-e(-hn^{\gamma})\right)\right|\right)+O\left(\mathbb{E}\left|\sum_{n\leq x}^{}f(n)n^{\gamma-2}\right|\right)\\
  :=&\mathbb{E}\left|\sum_{n\leq x}^{}f(n)\gamma n^{\gamma-1}\right|+\mathbb{E}_{I}+O\left(\mathbb{E}_{II}\right)+O\left(\mathbb{E}\left|\sum_{n\leq x}^{}f(n)n^{\gamma-2}\right|\right).
\end{aligned}
$$

Next, we deal with the first main term and the second error term. By standard partial summation, it can be deduced that 
$$
\begin{aligned}
    \mathbb{E}\left|\sum_{n\leq x}^{}f(n)\gamma n^{\gamma-1}\right|&=\gamma\mathbb{E}\left|-\int_{3}^{x}\left(\sum_{n\leq u}f(n)\right)\cdot\frac{1}{\gamma-1}u^{\gamma-2}du+x^{\gamma-1}\sum_{3\leq n\leq x}^{}f(n)+f(1)+f(2)\cdot2^{\gamma-1}\right|\\
    &\ll\mathbb{E}\left|\int_{3}^{x}\left(\sum_{n\leq u}f(n)\right)u^{\gamma-2}du\right|+\mathbb{E}\left|x^{\gamma-1}\sum_{n\leq x}^{}f(n)\right|\\
    &\ll\mathbb{E}\int_{3}^{x}\left|\sum_{n\leq u}f(n)\right|u^{\gamma-2}du+x^{\gamma-1}\mathbb{E}\left|\sum_{n\leq x}^{}f(n)\right|\\
    &\ll \int_{3}^{x}\mathbb{E}\left|\left(\sum_{n\leq u}f(n)^{}\right)\right|u^{\gamma-2}du+x^{\gamma-1}\mathbb{E}\left|\sum_{n\leq x}^{}f(n)\right|.
\end{aligned}
$$
By Fubini's theorem, here we may exchange the order of expectation and integral, since the integration is over the finite interval $[3,x]$. Then we just apply Harper's beyond square-root cancellation result directly. It follows that 
$$
\begin{aligned}
    \mathbb{E}\left|\sum_{n\leq x}^{}f(n)\gamma n^{\gamma-1}\right|
    &\ll \int_{3}^{x}\frac{\sqrt{u}}{(\log \log u)^{\frac{1}{4}}}u^{\gamma-2}du+x^{\gamma-1}\frac{\sqrt{x}}{(\log \log x)^{\frac{1}{4}}}\\
    &\ll\frac{x^{\gamma-\frac{1}{2}}}{(\log\log x)^{\frac{1}{4}}}.
\end{aligned}
$$
Combining the estimate with the fact $\gamma-\frac{1}{2}< \frac{\gamma}{2}$ when $\frac{1}{2}<\gamma<1$, we can conclude the first main term has the beyond square-root cancellation. With the similar process, it follows that the same conclusion holds for the second error term quickly. Thus we can focus on controlling the size of expectation $\mathbb{E}_{I}$ and $\mathbb{E}_{II}$.
\subsection{Estimate of $\mathbb{E}_{I}$, $\mathbb{E}_{II}$}
Notice the expectation $\mathbb{E}_{I}$, $\mathbb{E}_{II}$ both sum over $h$ and $n$, so we take out the inner sum first, that is summation over $n$. In the work of Soundararajan and Xu \cite{SoundXu}, it was shown the exponential sum $\sum_{n\leq x}^{}e(n\theta)f(n)$ exhibits only square-root cancellation when $\theta$ is an irrational number with poor rational approximation. This means we cannot always expect the exponential sum with random coefficients has better than square-root cancellation. Therefore, we need some transformation instead of directly estimating $\mathbb{E}|\sum_{n\leq x}^{}e(hn^\gamma)f(n)|$.

By using Taylor's expansion, we obtain that
$$
\begin{aligned}
    e(-h(n+1)^\gamma)-e(-hn^\gamma)&=e(-hn^\gamma)\left(e(-h(n+1)^\gamma+hn^\gamma)-1\right)\\
    &=e(-hn^\gamma)\left(e\left(-hn^\gamma\left(\left(1+\frac{1}{n}\right)^\gamma-1\right)\right)-1\right)\\
    &=e(-hn^\gamma)\left(e\left(-hn^\gamma\left(\frac{\gamma}{n}+O(n^{-2})\right)\right)-1\right)\\
    &=e(-hn^\gamma)\left(1-2\pi ih\gamma n^{\gamma-1}+O_h(n^{2\gamma-2})
    -1\right)\\
    &=-2\pi ih\gamma e(-hn^\gamma)n^{\gamma-1}+O_h(n^{2\gamma-2}).
\end{aligned}
$$
 
Here $n^{\gamma-1}$ in the first term enables us to get extra cancellation so that we can avoid dealing with the tough expectation $\mathbb{E}|\sum_{n\leq x}^{}e(hn^\gamma)f(n)|$ separately. Thus when we consider the expectation of the inner sum, we can get
$$
\begin{aligned}
    &\mathbb{E}\left|\sum_{n\leq x}^{}f(n)\left(e(-h(n+1)^{\gamma})-e(-hn^{\gamma})\right)\right|\\
    =&\mathbb{E}\left|-2\pi ih\gamma\sum_{n\leq x}^{}f(n)\left(e(-hn^\gamma)n^{\gamma-1}+O_h(n^{2\gamma-2})\right)\right|.
\end{aligned}
$$
The contribution of the error term above is $O\left(\frac{x^{2\gamma-3/2}}{(\log\log x)^{1/4}}\right)$. As for the main term, we use Cauchy-Schwarz inequality and orthogonality of the Steinhaus or Rademarcher random multiplicative functions, which means $\mathbb{E}f(m)\overline{f(n)}=\textbf{1}_{m=n}$. Then it follows that
$$
\begin{aligned}
    &\ \mathbb{E}\left|\sum_{n\leq x}^{}f(n)\left(e(-h(n+1)^{\gamma})-e(-hn^{\gamma})\right)\right|\\
    \ll_{H,\gamma}&\ \mathbb{E}\left|\sum_{n\leq x}^{}f(n)e(-hn^\gamma)n^{\gamma-1}\right|+O\left(\frac{x^{2\gamma-3/2}}{(\log\log x)^{\frac{1}{4}}}\right)\\
    \ll_{H,\gamma}&\ \left(\mathbb{E}\left|\sum_{n\leq x}^{}f(n)e(-hn^\gamma)n^{\gamma-1}\right|^2\right)^{1/2}+O\left(\frac{x^{2\gamma-3/2}}{(\log\log x)^{\frac{1}{4}}}\right)\\
   \ll_{H,\gamma}&\ \left(\sum_{m,n\leq x}^{}e(-hm^\gamma+hn^\gamma)(mn)^{\gamma-1}\mathbb{E}(f(m)\overline{f(n)})\right)^{1/2}+O\left(\frac{x^{2\gamma-3/2}}{(\log\log x)^{\frac{1}{4}}}\right)\\
   \ll_{H,\gamma}&\ \left(\sum_{n\leq x}^{}n^{2\gamma-2}\right)^{1/2}+O\left(\frac{x^{2\gamma-3/2}}{(\log\log x)^{\frac{1}{4}}}\right)\\
   \ll_{H,\gamma}&\ x^{\gamma-\frac{1}{2}}+\frac{x^{2\gamma-3/2}}{(\log\log x)^{\frac{1}{4}}}.
\end{aligned}
$$
According to the range $\frac{1}{2}<\gamma<1$, it can be ensured that the expectation above has better than square-root cancellation. With this result, we just use the triangle inequality for the summation over $h$ and take the parameter $H$ as a positive constant bigger than 2 such like 100. It then follows that $\mathbb{E}_{I}$ has better than square-root cancellation under the condition that $\gamma\in(\frac{1}{2},1)$. The same conclusion also holds for $\mathbb{E}_{II}$. So we finish the proof of Theorem 1.1.
\section{Results on character sums}
\subsection{Proof of Theorem 1.2}
\begin{lemma}
    Let $p$ be a large prime. Then uniformly for any $1\leq x\leq p$ and any $0\leq k\leq 1$, we have
    $$
    \frac{1}{p-1}\sum_{\chi\ {\rm mod}\ p}^{}\left|\sum_{n\leq x}^{}\chi(n)\right|^{2k}\ll\left(\frac{x}{1+(1-k)\sqrt{\log\log(10L)}}\right)^k,
    $$
    where $L=L_p:=\min\{x,p/x\}$.
\end{lemma}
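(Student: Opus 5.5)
This is Harper's low-moments theorem for character sums from \cite{HarperCha}, and the plan is to follow his argument: reduce to the random model of Lemma 2.3 and transfer the bound from the random side to the character side.

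\emph{Step 1: trivial ranges.} When $L$ is bounded the claim is just Hölder plus orthogonality:
$$\frac{1}{p-1}\sum_\chi\Big|\sum_{n\le x}\chi(n)\Big|^{2k}\le\Big(\frac{1}{p-1}\sum_\chi\Big|\sum_{n\le x}\chi(n)\Big|^{2}\Big)^k\le x^k.$$
Moreover, for $x>\sqrt p$ we have $L=p/x$. In that range we use Pólya's Fourier expansion of the sum $\sum_{n\le x}\chi(n)$ in terms of Gauss sums. This writes it as $\tau(\chi)$ times a short sum of length about $p/x$, which reduces to the case $x\le\sqrt p$ with $L=x$. So it suffices to treat $x\le \sqrt p$ and $L=x$ large.

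\emph{Step 2: conditioning on small primes.} We split $n=ab$, where $a$ is $y$-smooth and $b$ has all prime factors $>y$, with $y=x^{1/\log\log x}$ or similar. In the random setting Harper conditions on $(f(p))_{p\le y}$. For characters, the analogue is to replace the average over $\chi$ by a weighted average. We insert a smooth weight and expand $|\cdot|^{2}$ against a short Euler product $\prod_{p\le P}|1-\chi(p)p^{-1/2-\sigma}|^{-2}$ truncated to integers of size $<p^{1/10}$. By orthogonality, $\frac{1}{p-1}\sum_\chi\chi(m)\bar\chi(n)=\mathbf 1_{m\equiv n}$. As long as every product of integers that appears is below $p$, congruence forces equality, so these character moments coincide exactly with the Steinhaus moments.

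\emph{Step 3: importing the Harper bound.} With moments matched, we reproduce Harper's argument. By Hölder, the $2k$-th moment is bounded by the $k$-th moment of $\int|F_\chi(1/2+it)|^2/|t|^2\,dt$, where $F_\chi$ is the Euler product over primes up to $x^{1/\log\log x}$ or similar. The needed multiplicative-chaos estimates for the random Euler product, namely the correlation and ballot-type barrier events, then carry over to $\chi$. This works because only low moments of short Euler products are used, and those agree with the random ones when their length is at most $p^{o(1)}$. The factor $\sqrt{\log\log L}$ comes out of the barrier computation exactly as in Lemma 2.3, with $\log\log x$ replaced by $\log\log L$, since only primes up to a power of $L$ are usable.

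\emph{Main obstacle.} The hard step is Step 3. Harper's barrier argument works with events that are not polynomial in $\chi$, so orthogonality cannot be applied to them directly. The fix is to replace the indicator functions of the barrier events by high moments or by truncated exponential Taylor polynomials. Their lengths must be kept below $p^{1/2}$, which is where the $\min\{x,p/x\}$ restriction enters. In practice we would cite \cite{HarperCha}, Theorem 1 and its proof, rather than redo this step.
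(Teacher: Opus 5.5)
Your proposal agrees with the paper: the lemma is a verbatim restatement of Theorem 1 of \cite{HarperCha} (with $r,q$ renamed $p,k$), the paper's proof consists only of that citation, and your argument also rests on that citation in the end. One caution: your Step 1 is unnecessary, since Harper's theorem already covers the whole range $1\le x\le p$, and it is also not justified as stated, because P\'olya's expansion gives essentially $\tau(\chi)$ times the weighted sum $\sum_{1\le|h|\le H}\bar\chi(h)(1-e(-hx/p))/h$, whose tail $|h|>p/x$ carries an additive twist and contributes to the mean square at the same order as the range $|h|\le p/x$, so it does not reduce to a plain partial sum $\sum_{h\le p/x}\bar\chi(h)$ of the kind the lemma covers.
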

\begin{proof}
    We can refer to the proof in \cite{HarperCha}.
\end{proof}
Here we also follow the structure in the proof of Therorem 1.1. Denote $\mathbb{E}_{\chi}:=\frac{1}{p-1}\sum_{\chi\ {\rm mod}\ p}^{}$ as the mean value of the character sum. By introducing the indicator function of Piatetski-Shapiro sequence, we can transform into the following four parts:
$$
\begin{aligned}
    \mathbb{E}_{\chi}\left|\sum_{n\in\mathcal{N}^{(c)}(x)}^{}\chi(n)\right|\leq&\mathbb{E}_{\chi}\left(\left|\sum_{n\leq x}^{}\chi(n)\gamma n^{\gamma-1}\right|+\left|\sum_{0<|h|\leq H}^{}a_h\sum_{n\leq x}^{}\chi(n)(e(-h(n+1)^{\gamma})-e(-hn^{\gamma})))\right|\right)\\
    &+O\left(\mathbb{E}_{\chi}\left|\sum_{|h|\leq H}^{}b_h\sum_{n\leq x}^{}\chi(n)(e(-h(n+1)^{\gamma})-e(-hn^{\gamma})))\right|\right)+O\left(\mathbb{E}_{\chi}\left|\sum_{n\leq x}^{}\chi(n)n^{\gamma-2}\right|\right)\\
     :=&\mathbb{E}_{\chi}\left|\sum_{n\leq x}^{}\chi(n)\gamma n^{\gamma-1}\right|+\mathbb{E}_{\chi,I}+O\left(\mathbb{E}_{\chi,II}\right)+O\left(\mathbb{E}_{\chi}\left|\sum_{n\leq x}^{}\chi(n)n^{\gamma-2}\right|\right).
\end{aligned}
$$

Applying partial summation again, we find the process of proving the existence about better than square-root cancellation in Piatetski-Shapiro sequences remains the same as Theorem 1.1. The only difference is that we need to change the expectation $\mathbb{E}|\sum_{n\leq x}f(n)|$ for Steinhaus function into the mean value of character sums $\mathbb{E}_{\chi}|\sum_{n\leq x}\chi(n)|$. Therefore, we do not show all details of the proof.
\subsection{Proof of Corollary 1.3}
According to Theorem 1.2, we can obtain a stronger upper bound for character sum over Piatetski-Shapiro sequences, apart from a small exceptional set. To prove the corollary, we just need to suppose there are $t$ exceptional characters modulo $p$ such that 
$$
\left|\sum_{n\in\mathcal{N}^{(c)}(x)}\chi(n)\right|\gg |\mathcal{N}^{(c)}(x)|^{\frac{1}{2}}.
$$

Denote the set consisting of all exceptional characters as $T$. Also for the convenience of statement, we suppose $|\mathcal{N}^{(c)}(x)|$ does not exceed $\sqrt{p}$. From the following basic fact,
$$
\sum_{\chi\in T}\left|\mathcal{N}^{(c)}(x)\right|^{\frac{1}{2}}\leq\sum_{\chi\in T}\left|\sum_{n\in\mathcal{N}^{(c)}(x)}\chi(n)\right|\leq \sum_{\chi\ {\rm mod}\ p}^{}\left|\sum_{n\in\mathcal{N}^{(c)}(x)}\chi(n)\right|\ll (p-1)\frac{\sqrt{|\mathcal{N}^{(c)}(x)|}}{(\log\log |\mathcal{N}^{(c)}(x)|)^{\frac{1}{4}}},
$$
we can deduce that 
$$
t|\mathcal{N}^{(c)}(x)|^{\frac{1}{2}}\ll p|\mathcal{N}^{(c)}(x)|^{\frac{1}{2}}(\log\log |\mathcal{N}^{(c)}(x)|)^{-\frac{1}{4}}.
$$
Then we know the number of exceptional characters $t$ satisfies
\begin{align}
    t\ll\frac{p}{(\log\log |\mathcal{N}^{(c)}(x)|)^{\frac{1}{4}}}.
\end{align}
Therefore, the conclusion of Corollary 1.3 is an immediate consequence of Theorem 1.2, as $x^{1/c}$ is restricted to a fixed positive power of $p$.

\section{Further discussion}
In this paper, we utilize the beautiful results from Harper to prove that there also exists better than square-root cancellation in Piatetski-Shapiro sequence. Our result provides another positive example that this phenomenon will happen in sparse sets with relatively large multiplicative energy. Also, because Piatetski-Shapiro sequences have no as rich multiplicative structure as rough numbers and natural numbers, we believe it is not a necessary condition for the appearance of better than square-root cancellation. Therefore, how to characterize the critical structure and size of set $\mathcal{A}$ is still an open problem. 

Meanwhile, we notice that Beatty sequences are often investigated with Piatetski-Shapiro sequences together. It is natural to consider whether better than square-root cancellation will happen in Beatty sequences. However, when we use the indicator function to remove the restricted condition, we find it needs to deal with such an expectation
$$
\mathbb{E}\left|\sum_{n\leq x}^{}f(n)\left(e(-h\alpha^{-1}(n+1)-e(-h\alpha^{-1}n))\right)\right|.
$$
We cannot use the same method to prove there exists better than square-root cancellation for this expectation. Actually, by Soundararajan and Xu's work, if the coefficient $\alpha^{-1}$ has poor Diophantine approximation, the expectation can exhibit only square-root cancellation. In other words, there exists some Beatty sequences such that better than square-root cancellation phenomenon is unable to occur.

\section*{Conflict of Interest}
The authors declare that there are no conflicts of interest.
\section*{Data availability}
No data was used for the research described in the paper.
\section*{Declaration of AI use}
No artificial intelligence tools were used in the preparation of this manuscript.

\bibliographystyle{plainnat}

\begin{thebibliography}{11}

\bibitem{ChaPS}
R. C. Baker and W. D. Banks, \textit{Character sums with Piatetski-Shapiro sequences}. Q. J. Math. (2015), \textbf{66}(2), 393--416.

\bibitem{ChaPS2}
W. D. Banks and I. E. Shparlinski, \textit{Multiplicative character sums with twice-differentiable functions}. Q. J. Math. (2009), \textbf{60}, 401--411.

\bibitem{PrimePS1}
W. D. Banks and I. E. Shparlinski, \textit{Prime numbers with Beatty sequences}. Colloq. Math. (2009), \textbf{115}(2), 147--157.

\bibitem{PrimeGuo}
Victor Z. Guo, \textit{Piatetski-Shapiro primes in a Beatty sequence}. J. Number Theory, (2015), \textbf{156}, 317--330.

\bibitem{HardyXu}
S. Hardy and M. W. Xu, \textit{Helson's conjecture for smooth numbers}. (2026), arXiv:2511.03430v3.
\bibitem{HarperPi}
A. J. Harper, \textit{Moments of random multiplicative functions, I: low moments, better than squareroot cancellation, and critical multiplicative chaos}, Forum Math. Pi. (2020), \textbf{8}. e1, 95pp.

\bibitem{HarperCha}
A. J. Harper, \textit{The typical size of character and zeta sums is $o(\sqrt{x})$}. (2023), arXiv:2301.04390. 

\bibitem{HarperMo}
A. J. Harper, \textit{A note on character sums over short moving intervals}. J. Inst. Math. Jussieu. (2025), \textbf{24}(4), 1395--1427.

\bibitem{Helson}
H. Helson, \textit{Hankel forms}. Studia Math. (2010), \textbf{198}(1), 79--84.

\bibitem{Guo}
L. Lu, L. Y. Guo and Victor Z. Guo, \textit{Improvements on exponential sums related to Piatetski-Shapiro primes}. J. Number Theory, (2026), \textbf{281}, 700--725.

\bibitem{Vaaler}
J. D. Vaaler, \textit{Some extremal functions in Fourier analysis}. 
Bull. Amer. Math. Soc. (N.S.) (1985), \textbf{12}(2), 183--216.

\bibitem{PS}
I. I. Piatetski-Shapiro, \textit{On the distribution of prime numbers in sequences of the form $[f(n)]$}. Mat. Sb. (1953), \textbf{33}, 559--566.

\bibitem{SoundXu}
K. Soundararajan and M. W. Xu, \textit{Central limit theorems for random multiplicative functions}. J. Anal. Math. (2023), \textbf{151}(1), 343--374.

\bibitem{WangXu}
V. Y. Wang and M. W. Xu, \textit{Harper's beyond square-root conjecture}. Int. Math. Res. Not. IMRN. (2025), \textbf{18}, Paper No. rnaf279, 22 pp.

\bibitem{XuTrans}
M. W. Xu, \textit{Better than square-root cancellation for random multiplicative functions}, Trans. Amer. Math. Soc. Ser. B. (2024), \textbf{11}, 482--507.


\end{thebibliography}

\end{document}